\newtheorem{theorem}{Theorem}
\newtheorem{definition}[theorem]{Definition}
\newtheorem{proposition}[theorem]{Proposition}
\newcommand{\thmref}[1]{Theorem \ref{#1}}
\newcommand{\defref}[1]{Definition \ref{#1}}
\newcommand{\wdtld}[1]{\widetilde{#1}}
\newcommand{\vol}{{\textrm{vol}}}
\newcommand{\vd}{{\textrm{vd}}}
\newcommand{\vdm}{\wdtld{\vd}}
\newcommand{\wda}{\wdtld{a}}
\newcommand{\vtet}{{v_{\textrm{tet}}}}
\newcommand{\voct}{{v_{\textrm{oct}}}}
\newcommand{\falsum}{\wdtld{\#}}
\newcommand{\RR}{\mathbb{R}}
\author{Alice Kwon \textsuperscript{\textdagger}}
\author{Ying Hong Tham \textsuperscript{*}}
\address{\textsuperscript{\textdagger} \small Alice Kwon, Department of Science, SUNY Maritime, 6 Pennyfield Avenue, Bronx, NY 10465, USA}
\email{akwon@sunymaritime.edu}
\address{\textsuperscript{*} \small Ying Hong Tham
	Fachbereich Mathematik, Universit\"at Hamburg, Bundesstra{\ss}e 55, 20146 Hamburg, Germany} 
\email{ying.hong.tham@uni-hamburg.de}
\thanks{This work was supported by Deutsche Forschungsgemeinschaft via the Cluster of Excellence EXC 2121 ``Quantum Universe'' - 390833306.}
\subjclass[2020]{Primary 57K10, Secondary 57K32}
\keywords{Hyperbolic knot theory, Augmented link}
\begin{document}

\title{On the Volume Density Spectrum of Fully Augmented Links}
\maketitle

\begin{abstract}
For a hyperbolic fully augmented link in $S³$,
its \emph{FAL volume density} is the ratio of its volume
to the number of augmentations.
We show that the set of FAL volume densities
is dense in $[2\voct, 10\vtet)$, but discrete in $[\voct,2\voct)$.
\end{abstract}

%\section{Introduction}

Champanerkar, Kofman, and Purcell \cite{ckp-max} initiated the study of
the asymptotic behavior of the \emph{volume density} of hyperbolic links $K$,
defined as the ratio of volume to number of crossings (see \defref{d:vol-dens}),
as they Følner converge (see \cite[Def. 1.3]{ckp-max})
to some infinite biperiodic link.
In particular, they showed that for any sequence of hyperbolic links
that Følner converges to the infinite square weave,
their volume densities also converge to that of the infinite square weave,
which is $\voct$, the volume of the regular hyperbolic ideal octahedron.
This establishes that $\voct$ is a sharp upper bound on volume density.
Burton \cite{burton} proved that the set of volume densities
of all hyperbolic links is a dense subset of the interval $(0,\voct)$.

In this paper, we establish similar results for
hyperbolic \emph{fully augmented links (FALs)}.
Kwon \cite[Prop 3.6]{kwon2020} proves that the \emph{FAL volume density}
of a hyperbolic FAL $L$,
defined as the ratio of volume to number of augmentations,
lies in the interval $[\voct, 10\vtet)$,
where $\vtet$ is the volume of the regular hyperbolic ideal tetrahedron,
and \cite[Prop 3.7]{kwon2020} shows that these bounds are sharp.
We show (\thmref{t:main}) that the set of FAL volume densities is
dense in $[2\voct, 10\vtet)$, but discrete in $[\voct,2\voct)$,
a stark departure from Burton's result.

%%[Note to self: spectrum is the closure, in \cite{ckp-max} and \cite{burton}.
%%Actually makes sense, e.g. spectrum of operator is closure
%%of set of eigenvalues;
%%but spectrum is not closure in \cite{kwon2020}.]
%%
%%[Note to self: the strict bound $\vd(K) < c(K) ⋅ \voct$ follows from
%%\cite[Eqn. (2)]{ckp-max};
%%also, the argument used in \cite{ckp-max} to justify this upper bound
%%is not right, it should be $c(K) - 1$ octahedra...]

%%\section{Background and Definitions}
%%
%%CKP: vol dens first defined?
%%
%%Colins Adams - studied vol dens further, FALs in surfaces, etc
%%
%%Alice give ref for: original denseness result of spectrum for alternating links
%%
%%Purcell for FAL <-> circle packings
%%
%%Kwon: vol dens limit of S2 to T2
%%
%%our result is discrete + continuous: surprising!

%\section{Fully Augmented Links}
%\subsection{Fully Augmented Links}
%\label{s:FAL}

%(Somehow get across the idea that we mostly deal with links
%through the region graph $Γ_L$ or its quad graph $Q_L$)

We first recall some background material related to
fully augmented links (FALs) in the 3-sphere $S³$.
%summarized in \figref{f:FALs-graphs}.
We refer the reader to \cite{purcell-intro}, \cite{kwon2020}
for more details.
Since we are only interested in the volumes of FALs,
and the volume of a FAL is the same with or without half-twists
(see \cite{adams}), we will only consider FALs with no half-twists.

%%\begin{figure}[ht]
%%\includegraphics[width=14cm]{diagrams/FALs-graphs.jpg}
%%\caption{Bow-tie graph $B_L$ and region graph $Γ_L$,
%%and circle pattern realizing $Γ_L$.
%%Each vertex of $B_L$ meets four faces,
%%two bow-tie faces, two non-bow-tie faces (which may be the same).
%%}
%%\label{f:FALs-graphs}
%%\end{figure}

Let $K$ be a link in $S³$ with prime link diagram $D_K$ in $S²$,
and let $L$ be a FAL obtained by fully augmenting $D_K$;
it is known (e.g. \cite{purcell-intro}) that $L$ is hyperbolic.
Upon removing all full twists, $L$ becomes a union of two unlinks
$L'$ and $L''$, where $L'$ consists of the augmentation circles,
and $L''$ consists of circles contained in the projection surface $S² ⊆ S³$.
Let $a(L)$ denote the number of augmentation circles,
i.e.\ $a(L)$ is the number of components of $L'$.
Since $L$ is symmetric under reflection across $S² ⊆ S³$,
it follows that $Σ := S² \backslash L$ is a totally geodesic surface.
Cutting $S³ \backslash L$ along $Σ$,
we obtain two 3-manifolds $N_L,N_L'$ with totally geodesic boundary $Σ$.
We can picture $N_L$ (and $N_L'$) as a ball $B³$
with several circles (corresponding to $L''$) removed from the boundary $∂B³$
and $a(L)$ closed arcs (with endpoints on the boundary) removed from $B³$.

%%By the cut-slice-flatten procedure \cite[Appendix]{lackenby},
%%we obtain two polyhedra whose boundary graph is the
%%\emph{bow-tie graph of $L$}, denoted by $B_L$.
%%The bow-tie graph can be obtained by replacing each twist region
%%(and augmentation circle surrounding it)
%%with a bow-tie (a pair of triangular faces);
%%by gluing up the bow-ties (gluing triangular face to triangular face),
%%we can recover $N_L$ and $N_L'$ from these polyhedra.
%%The non-bow-tie faces of $B_L$ corresponding bijectively with
%%the non-bigon regions of $D_K$.
%%(See \figref{f:FALs-graphs}).
%%
%%By \cite{adams}, \cite[Thm 6.1]{purcell-cusp} (see TODO), when $D(K)$ is prime,
%%$L$ is hyperbolic.
%%TODO which theorem to cite?:\\
%%\cite[Thm 6.1]{purcell-cusp}:
%%If K has a prime, twist reduced diagram with at least 2 twist
%%regions, then the augmented link complement $S³ - L$ will admit a complete,
%%finite volume hyperbolic structure.\\
%%\cite[Thm 2.5]{purcell-intro}:
%%A FAL is hyperbolic if and only if the associated
%%link diagram is nonsplittable, prime, twist reduced, with at least two twist regions.
%%\\
%%\cite[Thm 2.11]{kwon2020}.

\begin{definition}{\cite[Def 3.1]{kwon2020}}
\label{d:vol-dens}
Let $L$ be a hyperbolic FAL.
Denote by $\vol(L)$ the volume of its complement,
and by $a(L)$ the number of augmentations on $L$.
We define its \emph{FAL volume density}, denoted $\vd(L)$,
as the ratio of volume to number of augmentations,
i.e.\ $\vd(L) := \vol(L) / a(L)$.
We define its \emph{modified FAL volume density}
to be $\vdm(L) := \vol(L) / (a(L) - 1)$.
\end{definition}

\begin{definition}
The \emph{FAL volume density spectrum} is the closure (in $\RR$)
of the set\footnote{
In \cite{kwon2020}, volume density spectrum refers to this set,
and not its closure;
here we follow \cite{ckp-max}, \cite{burton} instead,
where the volume density spectrum refers to the \emph{closure}
of the set of volume densities.
}
of FAL volume densities of FALs in $S³$.
\end{definition}

\begin{proposition}{\cite[Prop 3.6,3.7]{kwon2020}}
The FAL volume density spectrum lies in $[\voct,10\vtet]$,
and these bounds are sharp.
%The set of FAL volume densities of FALs in $S³$
%lies in $[\voct,10\vtet)$, and these bounds are sharp.
\end{proposition}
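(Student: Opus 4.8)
The plan is to exploit the polyhedral decomposition of fully augmented link complements. Recall (e.g.\ \cite{purcell-intro}) that cutting $S^3 \setminus L$ along the totally geodesic surface $Σ = S^2 \setminus L$ yields two isometric copies of an ideal polyhedron $P$, all of whose dihedral angles are right angles; its faces are of two types, the \emph{shaded} faces, one for each of the $a(L)$ augmentation circles (each twice-punctured disk meeting $P$ in a single ideal triangle), and the \emph{white} faces coming from the regions of the projection plane. Thus $\vol(L) = 2\vol(P)$, and the number of shaded faces of $P$ equals $a(L)$. The entire argument reduces to bounding $\vol(P)$ from above and below in terms of the number of shaded faces.

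For the lower bound, I would argue that each augmentation contributes at least $\voct$ to $\vol(L)$, i.e.\ $\vol(P) \geq \tfrac{1}{2}a(L)\,\voct$. The cleanest route is to observe that each augmentation circle sees, in the two polyhedra together, a region at least as voluminous as a regular ideal octahedron; the extremal case is realized by the Whitehead link, which is a fully augmented link with $a(L) = 1$ whose complement is a single regular ideal octahedron of volume $\voct$. One can make this precise either by a direct decomposition of $P$ into subregions indexed by the shaded faces, or by invoking Miyamoto-type lower bounds on the volume of right-angled ideal polyhedra. The Whitehead link then witnesses sharpness of the lower bound, $\vd = \voct$.

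For the upper bound, I would triangulate $P$ into ideal tetrahedra without introducing new vertices and count cells combinatorially in terms of the shaded faces: the right-angled ideal combinatorics force a bounded number---in fact at most five---of ideal tetrahedra per augmentation in $P$ (hence at most $10\,a(L)$ across the two polyhedra). Since the regular ideal tetrahedron is the unique volume maximizer among all ideal tetrahedra, each contributes volume at most $\vtet$, giving $\vol(L) \leq 10\,\vtet\,a(L)$. Because the gluing and angle constraints prevent all tetrahedra from being simultaneously regular, the inequality is strict for every fully augmented link, so $10\,\vtet$ is a supremum rather than a maximum; sharpness is then established by producing a family of fully augmented links whose triangulations become asymptotically regular, so that $\vd \to 10\,\vtet$. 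This is consistent with the half-open upper endpoint in \thmref{t:main}.

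The main obstacle is the precise combinatorial bookkeeping: establishing the exact per-augmentation cell counts (the factor $\tfrac12\voct$ below and the five tetrahedra above) requires a careful analysis of how the shaded and white faces of $P$ meet and how the standard triangulation of $P$ distributes its tetrahedra among the augmentations. Converting the lower bound into the clean constant $\voct$---rather than some smaller universal constant---is where the right-angled structure and the octahedral extremal case must be used most carefully; the upper bound, by contrast, rests mainly on the classical maximality of the regular ideal tetrahedron together with the cell count.
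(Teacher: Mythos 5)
Your outline has genuine gaps, and they sit exactly where the content of the proposition lies. For the lower bound, the claim that each augmentation ``sees a region at least as voluminous as a regular ideal octahedron'' is precisely what must be proved, and no mechanism is supplied; worse, your proposed extremal witness is not in the class under consideration. The Whitehead link arises as an augmentation carrying a half-twist; in the convention of this paper (no half-twists), an FAL with $a(L)=1$ is a two-component unlink, hence not hyperbolic, so hyperbolic FALs here have $a(L)\geq 2$, and the lower bound is witnessed by the fully augmented figure-eight knot $L_{4_1}$ (with $a=2$ and $\vol = 2\voct$), as the paper states. The actual argument (Kwon's, and reproduced inside the proof of \thmref{t:main} for the discreteness half) is global rather than per-augmentation: cutting along the totally geodesic reflection surface $\Sigma$ yields $N_L, N_L'$ with totally geodesic boundary and $\chi(N_L) = 1 - a(L)$, and Miyamoto's theorem gives $\vol(S^3 \setminus L) \geq 2\voct\,(a(L)-1)$; this implies $\vd(L) \geq \voct$ only because $2(a-1) \geq a$ exactly when $a \geq 2$. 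A local ``one octahedron per augmentation'' decomposition cannot reproduce this: Miyamoto's bound is linear in $\chi = 1-a$, not in $a$, and the equality case is a global rigidity statement (decomposition of $N_L$ into regular ideal octahedra), not a per-shaded-face condition. A small but symptomatic slip in the same direction: each augmentation circle contributes \emph{two} shaded ideal triangles to each polyhedron, not one (for the Borromean rings, the octahedron has four shaded faces with $a=2$), so the bookkeeping you propose would start from the wrong face count.

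For the upper bound, ``at most five ideal tetrahedra per augmentation'' is again asserted rather than proved; this is the Agol--Thurston count (the appendix to Lackenby's volume paper), whose sharp form is $\vol(S^3\setminus L) \leq 10\vtet\,(a(L)-1)$. The $-1$ there is what makes $\vd(L) < 10\vtet$ immediate and uniform, whereas your weaker count $\vol \leq 10\vtet\, a(L)$ forces you to argue strictness via ``the gluing and angle constraints prevent all tetrahedra from being simultaneously regular'' --- itself a nontrivial claim needing proof, and a delicate one, since in the geometric limit (the infinite fully augmented square weave) the decomposition \emph{does} become asymptotically regular; that limit is exactly how sharpness of $10\vtet$ is obtained, by F\o lner convergence, and this part of your sketch does match the paper. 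In sum, the two quantitative inputs --- the constant $\voct$ below and the tetrahedron count above --- \emph{are} the proposition, and your proposal defers both to ``careful combinatorial bookkeeping'' without supplying it; note also that the paper itself quotes this proposition from Kwon rather than reproving it, the only ingredient it establishes independently being the Miyamoto inequality just described.
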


The fully augmented figure eight knot, $L_{4₁}$,
achieves the lower bound, i.e.\ $\vd(L_{4₁}) = \voct$,
while $10\vtet$ can be approached by a sequence of FALs
that Følner converges to the infinite fully augmented square weave.
Note that the upper bound is not achievable.

%% Note to self: strict from Agol Thurston, vd(L) ≤ 10\vtet(a(L) - 1)

%\section{Main Results}

%\subsection{FAL Volume Density Spectrum in $S^3$}

\begin{theorem}
\label{t:main}
The FAL volume density spectrum
is discrete in the range $[\voct, 2\voct)$
and dense in the range $[2\voct,10\vtet)$.
\end{theorem}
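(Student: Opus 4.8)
The plan is to treat the two ranges by entirely different mechanisms: the dense range by a construction, the discrete range by an a priori lower bound. The organizing tool for both is the \emph{belt sum} of FALs. Given two FALs $L_1,L_2$ and a choice of augmentation circle in each, the twice-punctured disk bounded by an augmentation circle is a totally geodesic thrice-punctured sphere in the complement, so one may cut both complements along these rigid surfaces and reglue. The result is again a FAL, its volume is $\vol(L_1)+\vol(L_2)$, and the two chosen augmentation circles are identified into one, so $a(L)=a(L_1)+a(L_2)-1$. Hence $\vdm$ is \emph{additive}: $a(L)-1=(a(L_1)-1)+(a(L_2)-1)$, and $\vdm(L)$ is the weighted average of $\vdm(L_1),\vdm(L_2)$ with weights $a(L_i)-1$ (this is precisely the purpose of $\vdm$). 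I would first set this operation up carefully, citing the rigidity of totally geodesic thrice-punctured spheres for volume additivity and for hyperbolicity of the glued manifold.

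For denseness in $[2\voct,10\vtet)$ I would interpolate between two building blocks. The low anchor is the fully augmented figure eight $L_{4_1}$, for which $a=2$ and $\vol=2\voct$, so $\vdm(L_{4_1})=2\voct$. The high anchors are finite FALs $W_n$ drawn from a sequence Følner converging to the infinite fully augmented square weave, for which $\vd(W_n)\to10\vtet$ and hence $\vdm(W_n)\to10\vtet$ from below. Given a target $t\in(2\voct,10\vtet)$ and $\varepsilon>0$, pick $n$ with $\vdm(W_n)>t$; belt summing $p$ copies of $L_{4_1}$ with $q$ copies of $W_n$ in a chain gives a FAL whose $\vdm$ is the weighted average of $2\voct$ and $\vdm(W_n)$, and as $p,q$ vary these averages are dense in $(2\voct,\vdm(W_n))$, so some choice lands within $\varepsilon/2$ of $t$. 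Since $\vd=\vdm\,(1-1/a)$ and $a\to\infty$ as the number of summands grows, rescaling the multiplicities $(p,q)\mapsto(kp,kq)$ brings $\vd$ within $\varepsilon/2$ of $\vdm$, realizing densities arbitrarily close to $t$.

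For discreteness in $[\voct,2\voct)$ the crux is the sharp a priori bound
\[
\vol(L)\ \ge\ 2\voct\,\bigl(a(L)-1\bigr),\qquad\text{equivalently}\qquad \vdm(L)\ \ge\ 2\voct,
\]
for every hyperbolic FAL. Granting this, if $\vd(L)\le\rho<2\voct$ then $\rho\ge\vd(L)=\vdm(L)(1-1/a(L))\ge2\voct(1-1/a(L))$, forcing $a(L)\le 2\voct/(2\voct-\rho)$; thus only boundedly many augmentations are allowed. Since a hyperbolic FAL (up to half-twists, which do not affect volume) is determined by a prime diagram with $a(L)$ twist regions, and there are only finitely many such diagrams for each bounded value of $a(L)$, there are only finitely many density values $\le\rho$ for every $\rho<2\voct$. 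Consequently any point $x\in[\voct,2\voct)$ of the spectrum is a limit of densities all eventually $\le(x+2\voct)/2<2\voct$, a finite set, so $x$ is itself one of those finitely many values and is isolated; the spectrum is therefore discrete on $[\voct,2\voct)$, accumulation being possible only at the excluded endpoint $2\voct$ (realized by belt sums of $k$ copies of $L_{4_1}$, of densities $2\voct\,k/(k+1)\uparrow2\voct$).

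The main obstacle is the inequality $\vdm(L)\ge2\voct$, which is strictly stronger than the known bound $\vd(L)\ge\voct$ and is exactly what rigidifies the low range. I would prove it by reducing, via belt-sum additivity, to \emph{belt-prime} FALs: writing $L$ as a belt sum of blocks admitting no further essential decomposition along twice-punctured disks, $\vdm(L)$ is a weighted average of the $\vdm$ of the blocks, so it suffices to bound each prime block. For a prime block one passes to the decomposition of its complement into two isometric right-angled ideal polyhedra and bounds the volume of such a polyhedron below by $\voct$ times (number of shaded faces $-\,1$), with the regular ideal octahedron coming from $L_{4_1}$ as the unique equality case. Establishing this polyhedral estimate with the sharp constant and pinning down the equality case is the technical heart of the argument, and I expect it to require the combinatorics of the associated circle packing together with a volume estimate for right-angled ideal polyhedra in the spirit of Atkinson.
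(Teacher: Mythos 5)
Your dense-range argument is, in substance, identical to the paper's: belted sums along augmentation circles with additive volume and additive $\wda = a - 1$, so that $\vdm$ is a weighted average; interpolation with rational weights between $L_{4_1}$ (where $\vdm = 2\voct$) and a sequence of FALs whose $\vdm$ tends to $10\vtet$; and the multiplicity-rescaling trick $(p,q) \mapsto (kp,kq)$ to convert density of $\vdm$-values into density of $\vd$-values. That half is correct and complete. Your reduction of discreteness to the inequality $\vol(L) \ge 2\voct\,(a(L)-1)$, plus finiteness of FALs with boundedly many augmentations, is also exactly the paper's logic.

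The genuine gap is that you never prove that inequality --- you explicitly defer it as ``the technical heart,'' and the route you sketch is shaky on both counts. First, the estimate as you state it is miscounted: each of the two polyhedra in the FAL decomposition has $2a$ shaded triangular faces (two per crossing disk), so ``$\voct$ times (number of shaded faces $-1$)'' would read $(2a-1)\voct$ per polyhedron, which already fails for $L_{4_1}$, whose polyhedron is the regular ideal octahedron with $4$ shaded faces and volume $\voct$, not $3\voct$; you presumably mean $(a-1)\voct$, counting crossing disks rather than triangles. Second, even the corrected bound is out of reach of Atkinson's vertex-count estimate: each polyhedron has $E = 6a$ edges and, all ideal vertices being $4$-valent, $V = 3a$ vertices, so Atkinson gives only $\vol(P) \ge (3a-2)\voct/4$, which is strictly weaker than $(a-1)\voct$ for every $a \ge 3$. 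The paper sidesteps all of this with one citation: cutting $S^3 \setminus L$ along the totally geodesic reflection surface $\Sigma = S^2 \setminus L$ yields two manifolds $N_L, N_L'$ with totally geodesic boundary and $\chi(N_L) = \chi(N_L') = 1 - a(L)$, and Miyamoto's theorem $\vol(N) \ge -\voct\,\chi(N)$ (with equality exactly for decompositions into regular ideal octahedra) gives $\vol(L) \ge 2\voct\,(a(L)-1)$ at once --- no reduction to belt-prime pieces, no circle-packing combinatorics, and the equality case comes for free. Without Miyamoto's theorem, or a genuinely new sharp polyhedral estimate replacing it, your discreteness half is incomplete.
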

\begin{proof}
Let $L$ be a FAL in $S³$.
For the discreteness result,
we use a similar method to the proof of Proposition 2.14 in \cite{kwon2020}.
In \cite{miyamoto}, Miyamoto showed that
if $N$ is a hyperbolic 3-manifold with totally geodesic boundary,
then $\vol(N) ≥ - \voct \chi(N)$,
with equality exactly when $N$ decomposes into regular ideal octahedra.
We apply this result to $N = N_L$ and $N_L'$.
%which are the halves of the link complement
%$S³ \backslash L$ cut along the projection surface $S² ⊆ S³$.
Recall that $N_L$ is obtained from $B³$ by removing $a(L)$ closed arcs
and some circles from the boundary,
so $\chi(N_L) = 1 - a(L)$, and similarly for $N_L'$.
Thus $\vol(S³ \backslash L) = \vol(N_L) + \vol(N_L')
≥ 2 (a(L) - 1) \voct$, and
$\vd(L) ≥ 2\voct ⋅ (a(L) - 1) / a(L)$,
which is an expression that increases in value as $a(L)$ increases.
Since there are only finitely many FALs with at most a 
given number of augmentations, say $a(L) ≤ n$,
it follows that the set of FAL volume densities,
when restricted to $[\voct, 2\voct ⋅ (n - 1) / n)$, is finite,
and discreteness follows immediately.

Now we prove denseness of FAL volume densities
in the range $[2\voct, 10\vtet)$.
Given two FALs $L₁, L₂$,
and an augmentation circle $C₁ ⊆ L₁, C₂ ⊆ L₂$ from each,
we can consider their \emph{belted sum}, $L₁ \falsum_{C₁,C₂} L₂$,
as defined in \cite[Fig 3(b)]{adams-belted},
which is obtained by cutting the link diagrams along $C₁$ and $C₂$,
gluing the resulting $(2,2)$-tangles, closing the tangle,
and finally adding an augmentation.
By \cite[Cor 5.2]{adams-belted}, the volumes add under belted sum,
i.e.\ $\vol(L₁ \falsum L₂) = \vol(L₁) + \vol(L₂)$,
while the number of augmentations almost add,
i.e.\ $a(L₁ \falsum L₂) = a(L₁) + a(L₂) - 1$,
or equivalently, $\wda(L₁ \falsum L₂) = \wda(L₁) + \wda(L₂)$
(we omit the subscript in $\falsum_{C₁,C₂}$ as
the volume does not depend on the choice of $C₁$ nor $C₂$).
Thus
\[
\vdm(L₁ \falsum L₂) = \frac{\wda(L₁) ⋅ \vdm(L₁) + \wda(L₂) ⋅ \vdm(L₂)}{\wda(L₁) + \wda(L₂)}
\;\;;\;\;
\wda(L₁ \falsum L₂) = \wda(L₁) + \wda(L₂)
\]

More generally, we can take the belted sum of any finite number of FALs,
so that the FAL volume density $L$ is a weighted average:
\[
\vdm(L₁ \falsum ⋅⋅⋅ \falsum L_k) =
	\frac{\sum \wda(L_i) ⋅ \vdm(L_i)}{\sum \wda(L_i)}
\;\;;\;\;
\wda(L₁ \falsum ⋅⋅⋅ \falsum L_k) = \sum \wda(L_i)
\]

Setting all $L_i = L$, and writing $L^{(k)} = L \falsum ⋅⋅⋅ \falsum L$,
we have $\vdm(L^{(k)}) = \vdm(L)$, $a(L^{(k)}) = k ⋅ \wda(L) + 1$,
so
\[
\lim_{k \to ∞} \vd(L^{(k)})
= \lim_{k \to ∞} \vdm(L^{(k)}) ⋅ \frac{a(L^{(k)})}{\wda(L^{(k)})}
= \vdm(L) ⋅ \lim_{k \to ∞} \frac{k ⋅ \wda(L) + 1}{k ⋅ \wda(L)}
= \vdm(L) \ .
\]
Hence it suffices to show that the set of \emph{modified} FAL volume densities
is dense in $[2\voct, 10\vtet)$.

Consider $L^{(k,l)} = L₁^{\falsum k} \falsum L₂^{\falsum l}$,
a belted sum of $k$ copies of $L₁$ and $l$ copies of $L₂$.
We have
\[
\vdm(L^{(k,l)}) = \frac{k ⋅ \wda(L₁) ⋅ \vdm(L₁) + l ⋅ \wda(L₂) ⋅ \vdm(L₂)}
{k ⋅ \wda(L₁) + l ⋅ \wda(L₂)}
\]
Then for any $0 < α < 1$, we can approach the value
$α ⋅ \vdm(L₁) + (1 - α) ⋅ \vdm(L₂)$
by choosing a sequence of rational numbers $\{k_i / l_i\}_{i = 1,...}$
converging to $\wda(L₂) ⋅ α / \wda(L₁) ⋅ (1 - α)$,
so that $\lim_{i \to ∞} k_i ⋅ \wda(L₁) / l_i ⋅ \wda(L₂) = α / (1 - α)$
and hence $\lim_{i \to ∞} \vdm(L^{(k_i,l_i)}) =
α ⋅ \vdm(L₁) + (1 - α) ⋅ \vdm(L₂)$.
Thus the set of modified FAL volume densities
is dense in the interval $[\vdm(L₁), \vdm(L₂)]$.

To conclude the proof, we take $L₁ = L_{4₁}$,
the fully augmented figure-eight knot, which has $\vdm(L₁) = 2\voct$,
and we take $L₂$ to be an element in a sequence of FALs
whose modified FAL volume density approaches $10\vtet$;
for such a sequence, we may take $L₃,L₄,...$ whose
(non-modified) FAL volume density approaches $10\vtet$,
which exists by \cite[Prop 3.7]{kwon2020},
since the number of augmentations $a(L₃),a(L₄),...$
must converge to $∞$, so that
$\lim_{i \to ∞} \vdm(L_i) - \vd(L_i)
= \lim_{i \to ∞} \vd(L_i)/(a(L_i) - 1) = 0$,
\end{proof}

\bibliography{biblio}{}
\bibliographystyle{plain}

%\newcommand\arXiv[2]      {\href{https://arXiv.org/abs/#1}{#2}}
%\newcommand\doi[2]        {\href{https://dx.doi.org/#1}{#2}}
%\begin{thebibliography}{XX}
%\end{thebibliography}

%	-- look at Miyamoto?
%		 states that link complement MUST be exactly decomposed into regular oct
%		 in order to achieve 2v_oct
%-see Purcell about which FAL diagrams can decompose into octahedra

\end{document}